\newtheorem{theorem}{Theorem}[section]
\newtheorem{prop}[theorem]{Proposition}
\theoremstyle{definition}
\theoremstyle{remark}
\newtheorem{definition}[theorem]{Definition}
\newcommand{\MT}{\text{MT}}
\def\R{\mathbb{R}}
\newcommand\restr[2]{{%
  \left.\kern-\nulldelimiterspace %
  #1 %
  \vphantom{\big|} %
  \right|_{#2} %
  }}
\definecolor{darkorchid}{rgb}{0.6,0.196,0.8} %
\newcommand{\robin}[1]{{\color{darkorchid}[[\textbf{Robin: }#1]]}} 
\definecolor{darkblue}{rgb}{0, 0, 1} %
\definecolor{darkgrn}{rgb}{0, 0.75, 0}
\definecolor{eaclr}{rgb}{0.2, 0.7, 0}
\newcommand{\KJ}[1]{{\color{blue}[[\textbf{KJ: }#1]]}}
\title[Any Graph is a Mapper Graph]
    {Any Graph is a Mapper Graph}
    \author[Enrique G Alvarado]{Enrique G Alvarado}
    \author[Robin Belton]{Robin Belton}
    \author[Kang-Ju Lee]{Kang-Ju Lee}
    \author[Sourabh Palande]{Sourabh Palande}
    \author[Sarah Percival]{Sarah Percival}
    \author[Emilie Purvine]{Emilie Purvine}
    \author[Sarah Tymochko]{Sarah Tymochko}
    \address{Department of Mathematics, University of California at Davis, Davis, CA, USA}
    \email{ealvarado@math.ucdavis.edu}
    \address{Department Mathematical Sciences, Smith College, Northampton, MA, USA}
    \email{rbelton@smith.edu}
    \address{Research Institute of Mathematics, Seoul National University, Seoul, South Korea}
    \email{leekj0706@snu.ac.kr}
    \address{Department of Computational Mathematics, Science, and Engineering, Michigan State University, East Lansing, MI, USA}
    \email{palandes@msu.edu}
    \address{Department of Mathematics and Statistics, University of New Mexico, Albuquerque, NM, USA}
    \email{spercival@unm.edu}
    \address{Mathematics, Statistics, and Data Science, Pacific Northwest National Laboratory, Seattle, WA, USA}
    \email{Emilie.Purvine@pnnl.gov}
    \address{Department of Mathematics, University of California, Los Angeles, Los Angeles, CA, USA}
    \email{tymochko@math.ucla.edu}
\keywords{Mapper, Adaptability, Topological data analysis, Nerve}
\subjclass[2020]{62R40}
\thanks{KL was supported in part by the National Research Foundation of Korea (NRF) Grants funded by the Korean
Government (MSIP) (No.2021R1C1C2014185).}
\thanks{SP was supported in part by the NSF through grant CCF-2142713.}
\begin{document}

\begin{abstract}
The Mapper algorithm is a popular tool for visualization and data exploration in topological data analysis. We investigate an inverse problem for the Mapper algorithm: Given a dataset $X$ and a graph $G$, does there exist a set of Mapper parameters such that the output Mapper graph of $X$ is isomorphic to $G$? We provide constructions that affirmatively answer this question. Our results demonstrate that it is possible to engineer Mapper parameters to generate a desired graph. %
\end{abstract}

\maketitle 

\section{Introduction}
\label{sec:intro}

Topological data analysis (TDA) uses methods from topology to study the underlying structure and ``shape'' of a dataset. We refer the reader to \cite{carlsson2009topology} for an overview of this area. This paper focuses on \emph{Mapper}, introduced in \cite{singh2007topological}, a popular visualization tool in TDA that constructs a network representation of a dataset. Mapper has been used in many applications that include analyzing breast cancer data \cite{nicolau2011topology}, identifying diabetes subgroups \cite{li2015identification}, and analyzing Passiflora leaves \cite{percival2024topological}.

To apply the Mapper algorithm, the user needs to determine the following parameters: a \emph{lens} (or \emph{filter}) function $f \colon X \to Y$ from $X$ (the dataset, which we take to be a high-dimensional point cloud) to a lower-dimensional space $Y$, a \emph{cover} of the image $f(X)$ in $Y$, and a \emph{clustering} algorithm for the preimage of each cover element. Mapper is known to be sensitive to its parameters, and many researchers have studied how to optimize them \cite{johanssonCertified20, chalapathi2021adaptive, bui2020f,alvarado2023g, oulhaj2024differentiable, oulhaj2024deep}. 

In this paper, we illustrate Mapper's adaptability with respect to its parameters by providing constructions that show for a given dataset $X$ and graph $G$, there exists a lens function $f \colon X \to Y$ with a reference space $Y$ and a cover $\mathcal{U}$ of the image $f(X)$ in $Y$, that, together with a particular clustering algorithm, generate a Mapper graph isomorphic to $G$.

\section{Background}
\label{sec:background}

In this section, we collect the necessary terms related to Mapper graphs for the results of this work and recall the Mapper construction. We refer the reader to a standard text (e.g., \cite{MunkresTopology00}) for definitions and background in topology, and \cite{carlsson2021topological,matouvsek2003using} for an overview of applied topology and topological combinatorics, respectively. Throughout this paper, the notation $|\sigma|$ is used to denote the cardinality of a finite set $\sigma$.

\subsection{Simplicial Complexes}
We define simplicial complexes, which are often basic building blocks for extracting shapes from datasets. 

\begin{definition}[Simplicial Complex]
An \emph{(abstract) simplicial complex} $K$ is a collection of non-empty finite sets that is closed under the subset relation, i.e., if $\sigma \in K$ and $\tau\subseteq \sigma$, then $\tau \in K$. 
\end{definition}

An element $\sigma \in K$ is called a \emph{simplex} and the \emph{dimension} of $\sigma$ is $\dim(\sigma) \coloneq |\sigma|-1$. An $i$-dimensional simplex $\sigma$ is called an \emph{$i$-simplex}. In particular, $0$-simplices are \emph{vertices}, denoted by $V$, and $1$-simplices are \emph{edges}. The dimension of a simplicial complex $K$ is the maximum of the dimensions of simplices in $K$. We call a $1$-dimensional simplicial complex a \emph{graph}, which has no self-edges (also called loops) or multiple edges. It is referred to as a simple graph in the literature.

For a simplicial complex $K$, its \emph{geometric realization}, denoted by $\|K\|$, is a geometric simplicial complex whose faces correspond to those of $K$. Note that the topology of $\|K\|$ is a subspace topology of a Euclidean space and geometric realizations of $K$ are homeomorphic to each other. Let $e_0$ denote the zero vector and $e_i$ denote the $i$th standard basis vector for $i=1,2,\cdots,n-1$. With a bijection $\phi:V \to \{0,1, \cdots, n-1\}$ where $n:=|V|$, the simplicial complex $K$ can be realized in $\mathbb{R}^{n-1}$ as $\|K\|=\{c(\sigma) \mid \sigma \in K \}$, where $c(\sigma)$ is the convex hull of $\{e_{\phi(v)}\}_{v \in \sigma}$.

\subsection{Nerves and Nerve Theorems}
The Mapper construction involves a special type of simplicial complex, called a \emph{nerve}, which encodes the intersection patterns of a given collection of sets.  Nerve theorems under some assumptions show the relationship between a topological space and the topology of the nerve complex for a cover of the space. This provides the foundation of the Mapper construction.

\begin{definition}[Nerve]
Let $\mathcal{U}$ be a finite collection of sets. The \emph{nerve} of $\mathcal{U}$, denoted by $\mathrm{Nrv}(\mathcal{U})$, is the collection of subsets of $\mathcal{U}$ that consist of sets that have a non-empty intersection, i.e.,
\[
\mathrm{Nrv}(\mathcal{U})\coloneq\{\mathcal{U}' \subseteq \mathcal{U} \mid \bigcap_{U \in \mathcal{U}'}U \neq \emptyset\}.
\]
The nerve is also called the \emph{nerve complex} since a nerve is an abstract simplicial complex. See Figure \ref{fig:nerve} for an illustration of a nerve.
\end{definition}

 The basic nerve theorem of Leray \cite{leray1946l} states that if $\mathcal{U}$ is a finite collection of open sets in a topological space such that all intersections of sets in $\mathrm{Nrv}(\mathcal{U})$ are either contractible or empty (i.e., $\mathcal{U}$ is a \emph{good} open cover), then $\| \mathrm{Nrv}(\mathcal{U})\|$ is homotopy equivalent to $\bigcup_{U\in \mathcal{U}}U$. The homotopy equivalence is illustrated in Figure \ref{fig:nerve}.  More recent results show that the nerve theorem holds for closed convex covers in Euclidean spaces \cite{Bauer_2023}. 

\begin{figure}
    \centering
    \includegraphics[scale=0.3]{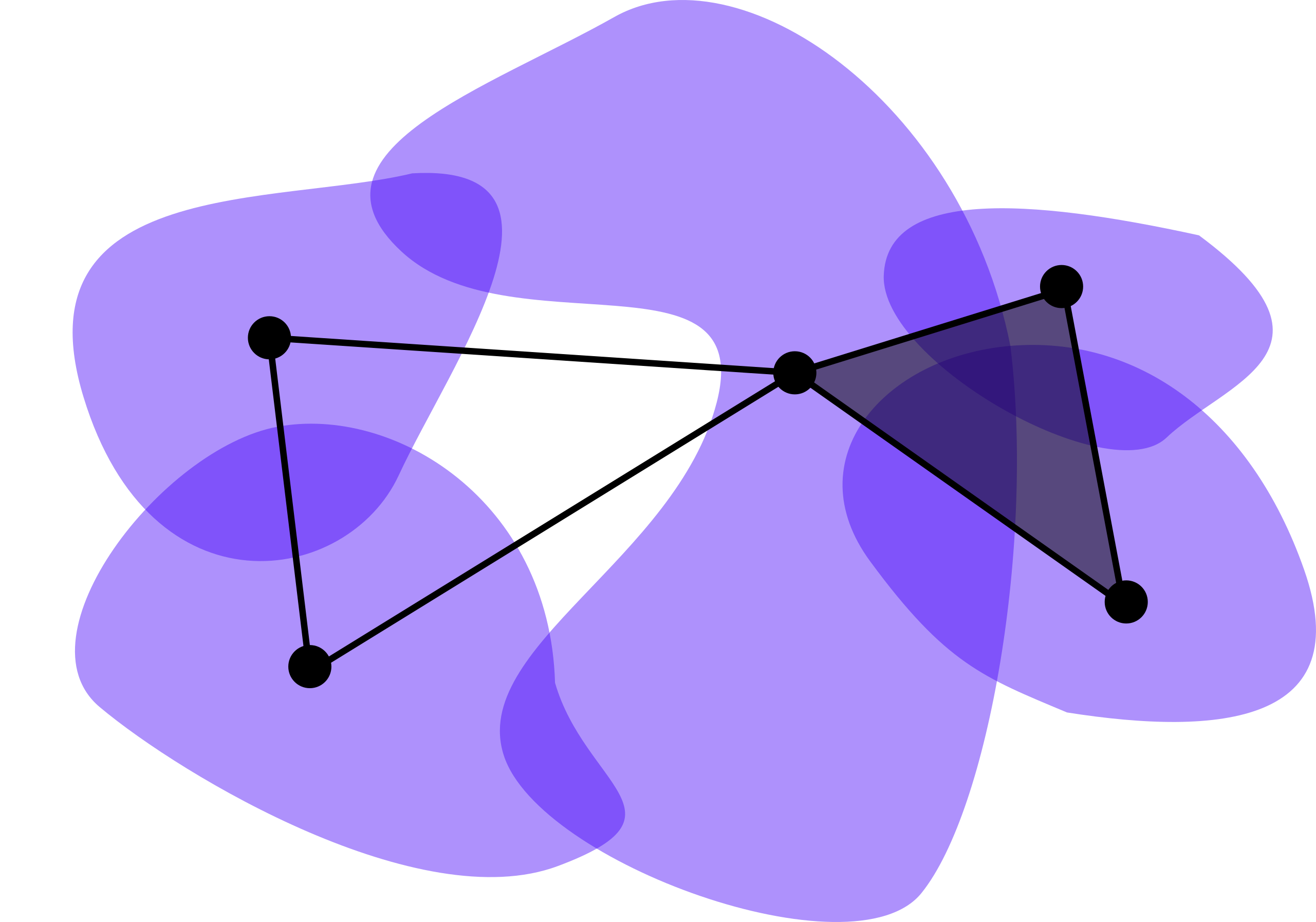}
    \caption{A collection $\mathcal{U}$ of sets and its nerve $\mathrm{Nrv}(\mathcal{U})$. The sets in $\mathcal{U}$ are colored in purple. Its nerve $\mathrm{Nrv}(\mathcal{U})$ consists of vertices, edges, and a triangle, and its geometric realization is colored in black. 
    The space $\bigcup_{U\in \mathcal{U}}U$ and the geometric simplicial complex $\|\mathrm{Nrv}(\mathcal{U})\|$ are homotopy equivalent, as shown in the nerve theorem.} 
    \label{fig:nerve}
\end{figure}

\subsection{Mapper Construction}
We now provide the \emph{Mapper} construction for a given set of parameter choices. In 2007, Singh, Mémoli, and Carlsson introduced the construction,  referring to it as the {\it statistical} version of Mapper \cite{singh2007topological}. It is also commonly known as the Mapper algorithm. For a dataset $X$, the Mapper construction produces a nerve with the following four steps: (See Figure \ref{fig:mapper-example} for an example of how to compute the Mapper graph for data points in $\R^2$.)
\begin{enumerate}
\item Select a \emph{lens} (or \emph{filter}) function $f\colon X \to Y$ from $X$ to a reference space $Y$;

\item Choose a cover $\mathcal{U}$ of the image $f(X)$ in $Y$, taken as an open or closed cover in practice \cite{carlsson2009topology};

\item Apply a clustering algorithm to $f^{-1}(U)$ for each $U \in \mathcal{U}$ and let $f^\ast(\mathcal{U})$ be the collection of resulting clusters, i.e., $f^\ast(\mathcal{U}) \coloneq \{\text{clusters of } f^{-1}(U) \mid U \in \mathcal{U}\}$;

\item Define the {\it Mapper construction} of $X$ to be the nerve $\mathrm{Nrv}(f^\ast(\mathcal{U}))$ of $f^\ast(\mathcal{U})$.
\end{enumerate}

The collection $f^\ast(\mathcal{U})$ in step 3 is regarded as a \emph{multiset}, i.e., identical clusters from different preimages $f^{-1}(U)$ are considered \emph{distinct} elements. In \cite{singh2007topological},
assigning a vertex to a cluster obtained from applying a clustering algorithm to each preimage $f^{-1}(U)$ illustrates that $f^\ast(\mathcal{U})$ is a multiset.

Since visualizing graphs is easier than visualizing high-dimensional simplicial complexes, in practice, step 4 often consists of only finding the $1$-skeleton of the nerve $\mathrm{Nrv}(f^\ast(\mathcal{U}))$. This means that the Mapper construction only focuses on singleton sets and pairs of sets with non-empty intersections. In the next section, we refer to this $1$-skeleton as the {\it Mapper graph} of $X$ and use the same notation for the graph as for the simplicial complex.

\begin{figure}[]
    \begin{center}
    \includegraphics[width=5 in]{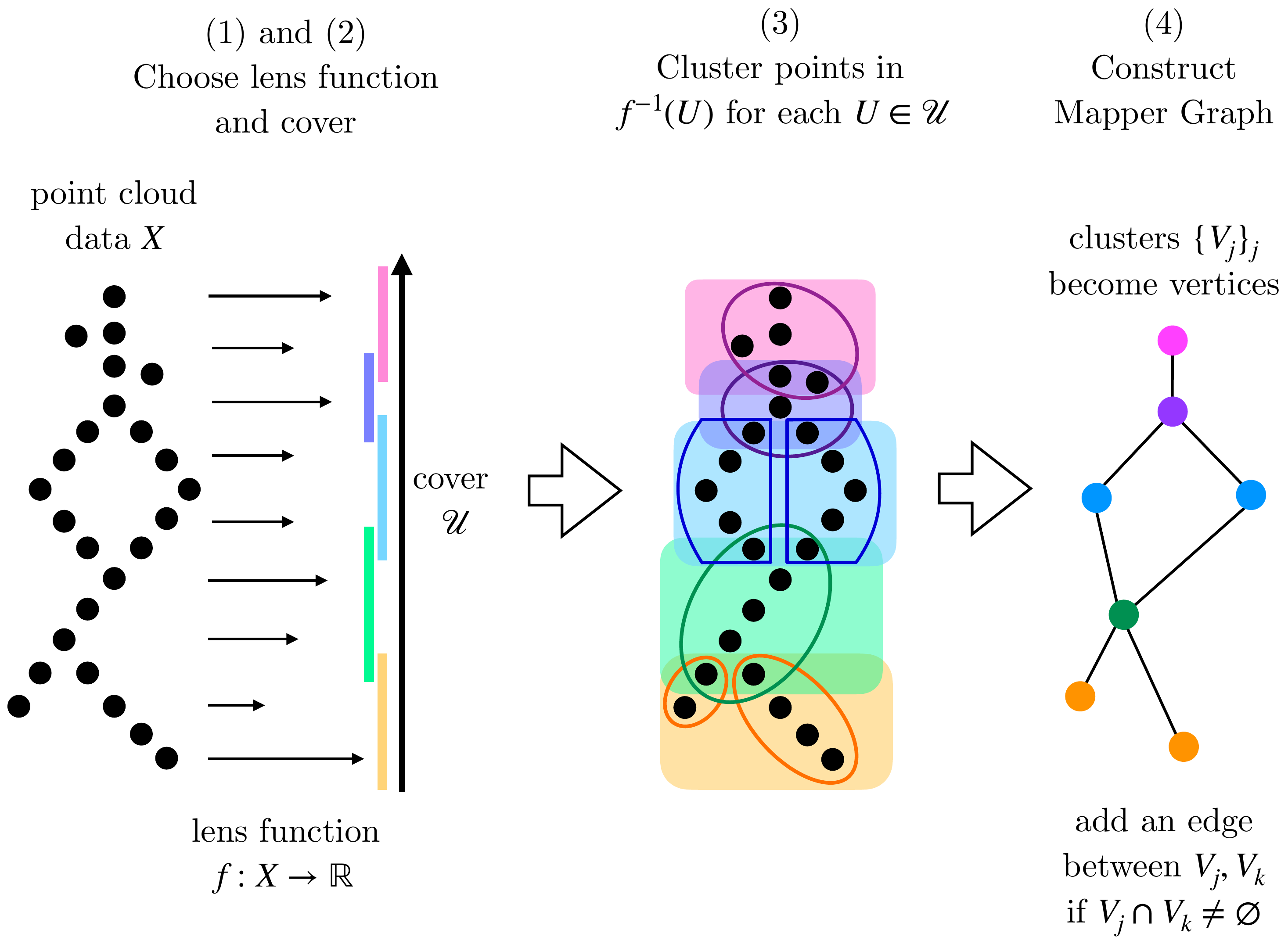}
    \caption{Mapper Graph Construction. An example of the four-step procedure to construct a Mapper graph from a finite set of points $X\subseteq \R^2$. In steps 1 and 2, the lens function $f\colon X\to \R$ and cover $\mathcal{U}$ of $f(X)$ are initialized by the user. The lens function is the height function and the cover is denoted by the colored overlapping intervals. In step 3, the preimage of each interval is denoted by the points in the shaded regions with the same color. A clustering algorithm chosen by the user is applied to each colored region. The points in each ellipsoid-shaped region are clustered together. In step 4, we construct the Mapper graph by creating a vertex for each cluster, and adding an edge between two vertices if the two clusters contain at least one point from $X$ in common. Observe the Mapper graph is the 1-skeleton of the nerve of the clustered sets in step 3.}
    \label{fig:mapper-example}
    \end{center}
\end{figure}

\section{Any graph is a Mapper graph}\label{trivial clustering}

In this section, we prove our main results showing that for a finite dataset $X$, any graph $G$ is a Mapper graph with two different constructions. One construction employs a star cover of a graph and the other uses convex subsets of $\mathbb{R}^3$. In both cases, if $X \subseteq \mathbb{R}^n$, we show that the lens function can be taken to be a continuous function. As a clustering algorithm, we choose the {\it trivial clustering algorithm}, which simply returns $\{A\}$ for any given set $A\subseteq X$. The Mapper construction with the trivial clustering algorithm can be formulated as follows.

\begin{definition}[Mapper with trivial clustering]\label{def: reebspace}
Let $f\colon X \to Y$ be a function from a set $X$ to a reference space $Y$. For any cover $\mathcal{U}$ of the image $f(X)$ in $Y$ and the cover $f^{-1}(\mathcal{U}) \coloneq \{f^{-1}(U) \mid U \in \mathcal{U}\}$ of $X$, 
we call the simplicial complex  
\[
\MT(\mathcal{U}, f) \coloneq \mathrm{Nrv}(f^{-1}(\mathcal{U}))
\]
the \emph{Mapper construction with trivial clustering}.
\end{definition}

\subsection{Using the star cover of a graph $G$}
\label{sec:graph-co-domain}

We use the given graph $G$ as the co-domain for the lens function $f$ with a \emph{star} cover of $G$. The \emph{star} of a vertex $v$, denoted by $\operatorname{st}(v)$, consists of the vertex $v$ along with all edges containing $v$. The \emph{star cover} is the collection of all stars of vertices of $G$. We equip the graph $G$ with the \emph{Alexandrov topology}, where the open sets are unions of stars \cite{alexandroff37}.
With this topology, the star cover forms an open cover of $G$. We then select a lens function that maps at least one point of a dataset $X$ to each edge or isolated vertex of $G$. The Mapper construction with these parameters leads to Theorem~\ref{theorem:trivial-mapper-star}. See Figure \ref{fig:star-open-cover} for an example of how to construct the cover and lens function. 

\begin{figure}[htp]
    \begin{center}
    \includegraphics[width=5.5 in]{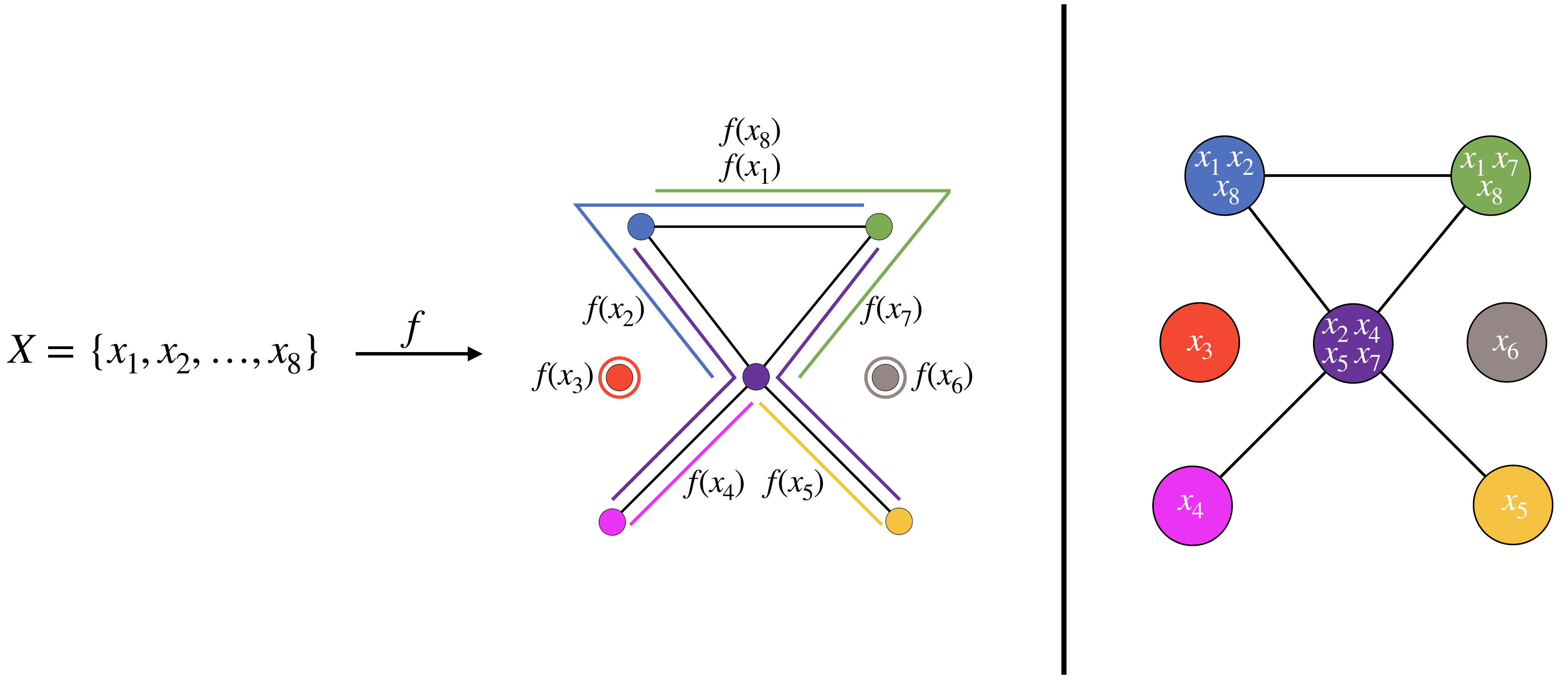}
    \caption{Mapper graph with star cover reconstruction. Left: Each color represents the star of a vertex. The set of all stars of vertices forms the cover. The lens function $f$ maps at least one element of $X$ to each edge and isolated vertex in $G$. Right: The Mapper graph is constructed using the function $f$ and the star cover. We obtain seven vertices that are colored according to their corresponding cover element. The elements of $X$ listed on each vertex are those mapped to that cover element. The Mapper graph is isomorphic to the original graph $G$. }
    \label{fig:star-open-cover}
    \end{center}
\end{figure}

\begin{theorem} %
\label{theorem:trivial-mapper-star}
Let $X$ be a set of points. If $G$ is a graph with vertex set $V$, edge set $E$, and isolated vertex set $I$ such that $|X| \geq |E|+|I|$, then there exists a cover $\mathcal{U}$ of $G$, and a function $f\colon X\to G$
such that the Mapper graph $\MT(\mathcal{U}, f)$ with trivial clustering is isomorphic to $G$.
\end{theorem}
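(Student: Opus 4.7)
The plan is to instantiate the two parameters explicitly and then verify the nerve structure by hand. Take $\mathcal{U}$ to be the star cover of $G$ described before the theorem, that is $\mathcal{U} = \{\operatorname{st}(v) : v \in V\}$; by the Alexandrov-topology remarks preceding the statement, this is an open cover of $G$. For the lens, I would use the inequality $|X| \geq |E| + |I|$ to pick a distinct point of $X$ for every edge and every isolated vertex: for each $e \in E$, select $x_e \in X$ and let $f(x_e)$ be an interior point of $e$; for each $v \in I$, select $x_v \in X$ and set $f(x_v) = v$. Any leftover points of $X$ can be sent anywhere, say to a fixed edge interior, since by the analysis below they will not create new simplices.

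The core observation is that for any two distinct vertices $u,w \in V$,
\[
\operatorname{st}(u) \cap \operatorname{st}(w) =
\begin{cases} \operatorname{int}(e) & \text{if } e=\{u,w\} \in E, \\ \emptyset & \text{otherwise,} \end{cases}
\]
and that any triple intersection $\operatorname{st}(u)\cap\operatorname{st}(w)\cap\operatorname{st}(z)$ is empty in a $1$-dimensional complex. Since preimages commute with intersections, the intersection pattern of $f^{-1}(\mathcal{U})$ is pulled back directly from this, so the nerve is automatically at most $1$-dimensional.

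From here the isomorphism is essentially bookkeeping. I would define the map $\Phi\colon V \to V(\MT(\mathcal{U},f))$ by $\Phi(v) = f^{-1}(\operatorname{st}(v))$. Each preimage is nonempty: a non-isolated vertex $v$ is incident to some edge $e$, whose representative $x_e$ lies in $\operatorname{st}(v)$; an isolated $v$ has its own $x_v$. This shows $\Phi$ hits every vertex of the nerve, and distinctness follows because $x_v$ (or $x_e$ for $v \in e$) witnesses membership in $\operatorname{st}(v)$ but not in any other star of a non-adjacent vertex. For edges, $\{\Phi(u),\Phi(w)\}$ is a $1$-simplex of the nerve iff $f^{-1}(\operatorname{st}(u)\cap\operatorname{st}(w)) \neq \emptyset$, which by the displayed identity happens iff $\{u,w\}\in E$ and some point maps into $\operatorname{int}(e)$ — exactly the case since we placed $x_e$ there.

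The only mild obstacle is the edge case of ``extra'' points in $X$ (when $|X| > |E|+|I|$): I need to argue that mapping them anywhere into $f(X) \subseteq G$ does not produce new intersections, which is immediate from the intersection computation above, since those intersections depend only on the existence, not the multiplicity, of points in each edge interior or isolated vertex. Nothing else requires hypotheses on $X$, so the construction is complete.
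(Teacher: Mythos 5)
Your proposal is correct and follows essentially the same route as the paper's proof: star cover, a lens function hitting every edge and isolated vertex, and the correspondence $v \mapsto f^{-1}(\operatorname{st}(v))$ checked against the nerve's edge condition. The one subtlety worth noting is that your injectivity argument for $\Phi$ only rules out coincidences between stars of \emph{non-adjacent} vertices; for two adjacent vertices whose stars have identical preimages (e.g.\ $G=K_2$ with a single data point on the edge), the map is a bijection only because the cover preimages are treated as a multiset indexed by the cover elements --- the convention the paper states in its background section and on which its own ``bijection'' implicitly relies.
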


\begin{proof}
Let $\mathcal{U}$ be the {\it star cover} of $G$ defined as $\mathcal{U}\coloneq\{\operatorname{st}(v) \mid v \in V\}$. Since $|X| \geq |E|+|I|$, we can define a function $f \colon X \to G$ such that $f$ is surjective onto the edges and isolated vertices of $G$ (i.e., each edge or isolated vertex has at least one point that is mapped onto it). Note that the vertex set of the Mapper graph with trivial clustering $\MT(\mathcal{U}, f)$ is given by $f^{-1}(\mathcal{U})$. Consider a bijection $\varphi \colon V \to f^{-1}(\mathcal{U})$ defined by $\varphi(v)=f^{-1}(\operatorname{st}(v))$ for each $v \in V$. Take $v_1, v_2 \in V$. By the definition of the nerve,
 $\{\varphi(v_1),\varphi(v_2)\}$ is an edge of the Mapper graph if and only if $f^{-1}(\operatorname{st}(v_1)) \cap f^{-1}(\operatorname{st}(v_2))$ is non-empty.
 Since $f$ is surjective onto the edges and isolated vertices of $G$, we have $f^{-1}(\operatorname{st}(v_1)) \cap f^{-1}(\operatorname{st}(v_2)) \neq \emptyset$ if and only if  $\operatorname{st}(v_1) \cap \operatorname{st}(v_2) \neq \emptyset$. This occurs if  and only if $\{v_1,v_2\} \in E$. Therefore, $\{\varphi(v_1),\varphi(v_2)\}$ is an edge of $\MT(\mathcal{U}, f)$ if and only if
 $\{v_1,v_2\} \in E$, which shows that $\MT(\mathcal{U}, f)$ is isomorphic to $G$. 
\end{proof}

Theorem~\ref{theorem:trivial-mapper-star} is admittedly quite contrived since using a graph $G$ as the co-domain of the lens function is unlikely to be anyone's first choice for a Mapper parameter. However, we are still able to construct a lens function and cover of $\R^3$ such that any graph is a Mapper graph. Any graph can be realized in $\mathbb{R}^d$ for $d\geq 3$, and the star of a vertex in $G$ naturally corresponds to the star of the vertex in the realization. We modify the lens function in Theorem~\ref{theorem:trivial-mapper-star} so that it maps points to \emph{points} in edges instead of edges themselves, and \emph{points} for isolated vertices instead of isolated vertices themselves. This construction generates the Mapper graph isomorphic to $G$. The proof is analogous to that of Theorem~\ref{theorem:trivial-mapper-star}.

\subsection{Using convex subsets of $\R^3$}
\label{sec:convex-subsets} \emph{Convex} subsets of $\R^3$ instead of star-shaped subsets would be an attractive choice for a cover in the Mapper construction. We provide a convex cover of $\R^3$ and a lens function such that any graph can be represented as a Mapper graph. For this construction, we employ a result on \emph{$d$-representable complexes}, which are simplicial complexes $K$ that are isomorphic to the nerve of a finite collection of convex sets in $\R^d$. Wegner presented the representability and its optimality for graphs as $1$-dimensional simplicial complexes in 1967 \cite{wegner1967eigenschaften} and Perel'man rediscovered this result in 1985 \cite{perelman85realization}. Refer to Section~\ref{sec:generaliziation} for results concerning
simplicial complexes.
\begin{theorem}[\cite{wegner1967eigenschaften}] A graph $G$ is $3$-representable, with $3$ being the smallest possible value.
\label{theorem:representability}
\end{theorem}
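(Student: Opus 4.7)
The plan is to handle the two assertions separately: that every graph is $3$-representable, and that dimension $2$ is insufficient for some graph.

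For the upper bound, I would first realize $G$ as a straight-line drawing in $\mathbb{R}^{3}$ with vertices in sufficiently general position; since any two segments between generic points of $\mathbb{R}^{3}$ are skew, this drawing is automatically free of edge crossings. For each edge $e$ I then pick a representative point $p_{e}$ in the relative interior of $e$ (e.g.\ the midpoint, perturbed generically), and for each vertex $v$ I set
\[
C_{v} := \mathrm{conv}\bigl(\{v\} \cup \{p_{e} : e \ni v\}\bigr),
\]
with isolated vertices given their own auxiliary distinguished point. The ``existence of edges'' direction of the desired isomorphism $G \cong \mathrm{Nrv}(\{C_{v}\})$ is then automatic: for $e = \{u,v\} \in E$, $p_{e} \in C_{u} \cap C_{v}$, so $\{u,v\}$ is an edge of the nerve.

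The main work, and the step I expect to be the hardest, is verifying the two converses: $C_{u} \cap C_{v} = \varnothing$ whenever $\{u,v\} \notin E$, and $C_{u} \cap C_{v} \cap C_{w} = \varnothing$ for every triple. In general the convex hulls of different ``stars'' can overlap in $\mathbb{R}^{3}$, so I would either (i) place the vertices and the $p_{e}$ in sufficiently general position and run a transversality argument, since convex hulls of disjoint generic point sets in $\mathbb{R}^{3}$ are generically disjoint and a common triple intersection is a codimension-$\ge\!1$ coincidence that a small perturbation destroys, or (ii) shrink each $C_{v}$ to a thin convex neighborhood of the geometric star of $v$ inside the embedding, chosen tight enough that it meets any non-incident star only at the prescribed midpoint $p_{e}$. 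This is exactly where the dimension count $3 = 2\cdot\dim(G) + 1$ actually enters.

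For the lower bound, the plan is to exhibit an explicit graph $G_{0}$ that admits no $2$-representation. The idea is to engineer $G_{0}$ so that the combinatorics of its required pairwise intersections forces, via a Helly- or Radon-type argument in $\mathbb{R}^{2}$, three of the representing convex sets to share a common point; such a forced triple intersection would add a $2$-simplex to the nerve and contradict $\mathrm{Nrv}\cong G_{0}$, since the graph $G_{0}$ has no $2$-simplex. Combined with the upper bound, this shows that $3$ is the smallest admissible value.
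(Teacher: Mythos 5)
First, note that the paper does not prove this statement at all: it is imported from Wegner (1967), with a pointer to Tancer's survey for a proof sketch, so there is no in-paper argument to compare yours against, and your attempt must stand on its own. Judged that way, it has a genuine gap in both halves. For the upper bound you correctly isolate the hard step---$C_u\cap C_v=\emptyset$ for non-edges and $C_u\cap C_v\cap C_w=\emptyset$ for all triples---but neither proposed fix works. The transversality argument (i) fails because as soon as $\deg(u)\geq 3$ the set $C_u$ is a full-dimensional convex body in $\mathbb{R}^3$, and an intersection of full-dimensional convex bodies is an \emph{open}, hence perturbation-stable, condition rather than a codimension-$\geq 1$ coincidence: if the edge-points $p_e$ around $u$ happen to surround a non-neighbour $v$, then $v$ lies in the interior of $C_u$ and no small perturbation removes it. Since the bad set of placements can have nonempty interior, ``sufficiently general position'' cannot avoid it; you would have to exhibit a specific good placement, which is precisely the content of Wegner's construction (points chosen on the moment curve in $\mathbb{R}^3$, assigned to vertices and edges in a controlled order so that the offending Radon partitions cannot occur). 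The skewness of generic segments only makes the one-dimensional drawing crossing-free, which says nothing about the three-dimensional hulls, and the slogan $3=2\cdot 1+1$ is invoked but never actually used. Option (ii) is vacuous: the geometric star of $v$ is not convex, and every convex set containing $\{v\}\cup\{p_e : e\ni v\}$ already contains $C_v=\mathrm{conv}\bigl(\{v\}\cup\{p_e : e\ni v\}\bigr)$, so there is nothing ``thinner'' to shrink to; the only genuine freedom is in the placement of the points.

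The lower bound half is a statement of intent rather than an argument: no candidate graph $G_0$ is produced and no mechanism forcing a triple intersection in the plane is given. This is not a routine omission. The obvious candidates fail---for instance, $n$ lines in general position are convex sets in $\mathbb{R}^2$ that pairwise intersect with no three concurrent, so the triangle-free complete graph $K_n$ \emph{is} $2$-representable and pairwise intersection alone never forces a triple point via Helly or Radon. Exhibiting a graph that provably admits no $2$-representation is itself a nontrivial construction, and as written both directions of the theorem remain unproved.
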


A proof sketch of Theorem~\ref{theorem:representability} is provided in \cite[Theorem 3.1]{tancer13intersection}. Theorem~\ref{theorem:representability} asserts the existence of a finite collection of convex subsets in $\R^3$ whose nerve is isomorphic to a given graph $G$. 
This collection will cover the image of a lens function with a co-domain in $\mathbb{R}^3$. The function maps our data points to the pairwise intersections between the convex sets, and to the convex sets corresponding to isolated vertices. This construction generates the Mapper graph isomorphic to a given graph $G$. See Figure \ref{fig:convex-open-cover} for an illustration of the construction.
\vskip 10pt
\begin{figure}[h]
    \begin{center}
    \includegraphics[width=5.5 in]{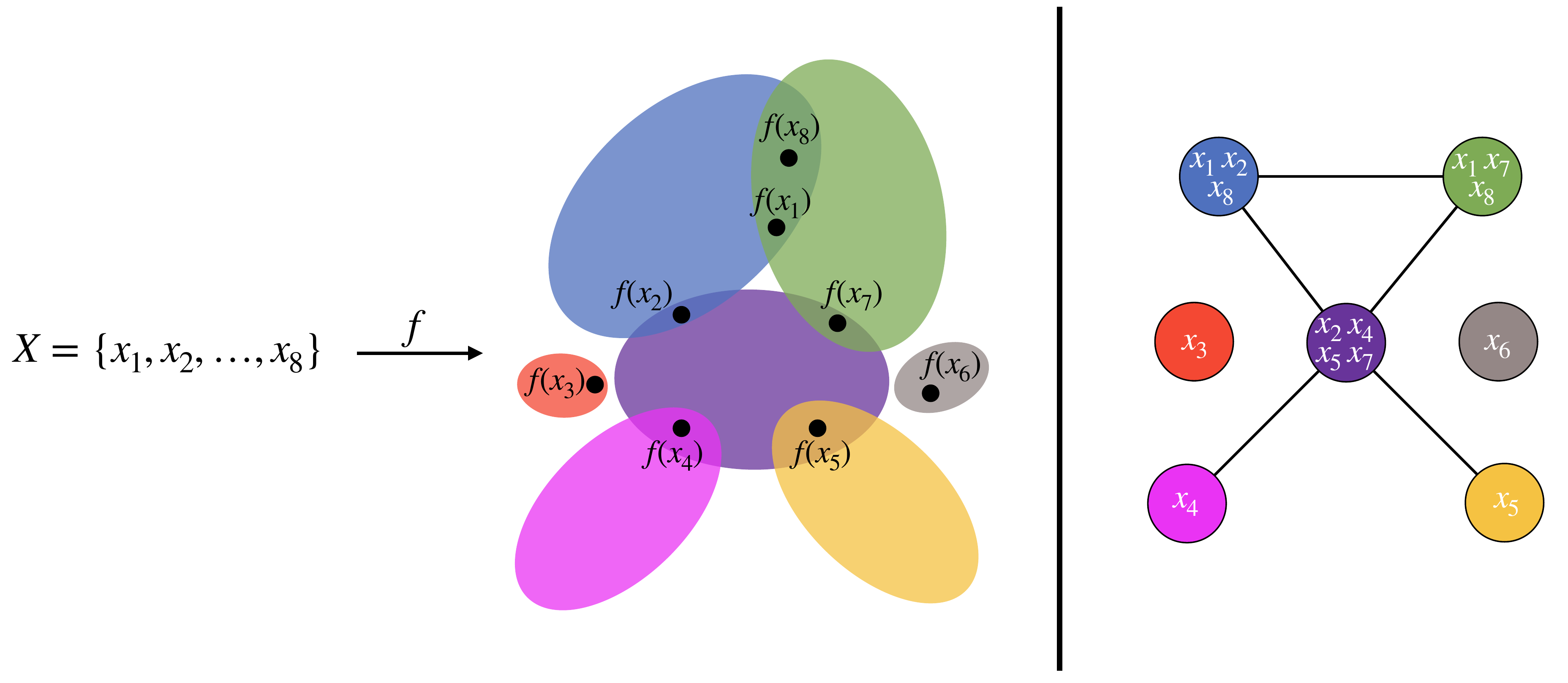}
    \caption{Mapper graph with convex sets reconstruction. We consider $X=\{x_1, x_2, \dots, x_8\}$ and the same graph as in Figure~\ref{fig:star-open-cover}. Left: Each color represents a convex set. The set of all convex sets forms a cover. The lens function $f$ maps at least one element of $X$ to each intersection of convex sets and isolated convex set. Right: The Mapper graph is constructed using the function $f$, the convex cover, and trivial clustering. We obtain seven vertices that are colored according to their corresponding cover element. The elements of $X$ listed on each vertex are those mapped to that cover element. The Mapper graph is isomorphic to the original graph.}
    \label{fig:convex-open-cover}
    \end{center}
\end{figure}
\vskip 5pt
\begin{theorem}\label{thm: Any graph is a mapper graph v2}
Let $X$ be a set of points. If $G$ is a graph with edge set $E$ and isolated vertex set $I$ such that $|X| \geq |E|+|I|$, then there exists a collection $\mathcal{C}$ of convex sets in $\mathbb{R}^3$ and a function \mbox{$f \colon X \to \mathbb{R}^3$} such that the Mapper graph $\MT(\mathcal{C}, f)$ with trivial clustering is isomorphic to $G$. 
\label{theorem:convex-sets}
\end{theorem}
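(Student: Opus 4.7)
The plan is to execute the outline sketched in the paragraph immediately preceding the theorem, mimicking the proof of Theorem \ref{theorem:trivial-mapper-star} with the star cover replaced by Wegner's convex cover. First, I would invoke Theorem \ref{theorem:representability} to obtain a finite collection $\mathcal{C} = \{C_v : v \in V\}$ of convex subsets of $\mathbb{R}^3$, indexed by the vertex set $V$ of $G$, such that $\mathrm{Nrv}(\mathcal{C}) \cong G$ under the correspondence $v \leftrightarrow C_v$. Equivalently, $C_u \cap C_v \neq \emptyset$ iff $\{u,v\} \in E$, and no three members of $\mathcal{C}$ share a common point.

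Next, using $|X| \geq |E| + |I|$, I would build the lens function as follows: for each edge $e = \{u,v\} \in E$ choose a point $p_e \in C_u \cap C_v$, and for each isolated vertex $w \in I$ choose a point $q_w \in C_w$. Letting $P$ denote the resulting finite set of points, pick any $f : X \to \mathbb{R}^3$ whose image equals $P$ (remaining points of $X$ can be mapped arbitrarily into $P$). Since $P \subseteq \bigcup_{v \in V} C_v$, the collection $\mathcal{C}$ covers $f(X)$, so it is an admissible cover for the Mapper construction.

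Finally, I would verify the isomorphism verbatim as in Theorem \ref{theorem:trivial-mapper-star}. Define $\varphi : V \to f^{-1}(\mathcal{C})$ by $\varphi(v) = f^{-1}(C_v)$. This is a bijection because every $C_v$ contains at least one point of $f(X)$ (namely $q_v$ if $v \in I$, or $p_e$ for any edge $e$ incident to $v$), and because distinct indices label distinct cover elements, which remain distinct as multiset entries of $f^{-1}(\mathcal{C})$. For the edge test, I use the identity $f^{-1}(C_u) \cap f^{-1}(C_v) = f^{-1}(C_u \cap C_v)$: if $\{u,v\} \in E$, the point $p_{\{u,v\}} \in f(X) \cap C_u \cap C_v$ witnesses non-emptiness, and if $\{u,v\} \notin E$, then $C_u \cap C_v = \emptyset$ by the nerve property, forcing the preimage to be empty. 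Hence $\{\varphi(u),\varphi(v)\}$ is an edge of $\MT(\mathcal{C}, f)$ iff $\{u,v\} \in E$, so $\MT(\mathcal{C}, f) \cong G$.

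I do not anticipate a substantive obstacle: essentially all of the mathematical content is packaged into Theorem \ref{theorem:representability}, and the remainder is a formal translation of the star-cover argument. The only points requiring mild attention are confirming that $\mathcal{C}$ covers the image $f(X)$ (rather than all of $\mathbb{R}^3$), and observing that because $\mathrm{Nrv}(\mathcal{C})$ is already $1$-dimensional no spurious higher simplices arise, so passing to the $1$-skeleton is automatic.
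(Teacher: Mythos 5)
Your proposal is correct and follows essentially the same route as the paper: invoke Wegner's $3$-representability theorem to obtain the convex collection $\mathcal{C}$ with $\mathrm{Nrv}(\mathcal{C})\cong G$, then use $|X|\geq |E|+|I|$ to send points of $X$ into each pairwise intersection and each isolated convex set so that the nerve of the preimages reproduces $G$. Your write-up is in fact somewhat more explicit than the paper's (which compresses the final isomorphism check into one sentence), and your side remarks about covering only $f(X)$ and the absence of higher simplices are both accurate.
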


\begin{proof}
Given our graph $G$, Theorem~\ref{theorem:representability} guarantees that there exists a collection $\mathcal{C}$ of convex subsets of $\mathbb{R}^3$ such that $\mathrm{Nrv}(\mathcal{C})$ is isomorphic to $G$. Hence, by the definition of the nerve, $|\{ \{C_1,C_2\} \subseteq \mathcal{C}\mid C_1 \cap C_2 \neq \emptyset, C_1 \ne C_2\}|=|E|$. Since $|X| \geq |E|+|I|$, we can define a lens function $f \colon X \to \bigcup_{C\in \mathcal{C}} C$ so that non-empty intersections between pairs of the convex sets and isolated convex sets have images of $f$, i.e., 

(1) $f(X) \cap (C_1 \cap C_2) \neq \emptyset$ for any pair $C_1, C_2 \in \mathcal{C}$ with non-empty intersection, and 

(2) $f(X) \cap C \neq \emptyset$ for $C \in \mathcal{C}$ with $C \cap\left(\bigcup{ \{C' \in \mathcal{C} \mid C' \ne C \}}\right) =\emptyset$. 

Therefore, with such a cover $\mathcal{C}$ of $f(X)$, we see that $\MT(\mathcal{C}, f)$ is isomorphic to $G$.
\end{proof}

Theorem~\ref{theorem:convex-sets} states that any graph is a Mapper graph with trivial clustering even using a cover to be made up of convex subsets of $\R^3$. This construction could potentially be implemented. The proof of Theorem~\ref{theorem:representability} is constructive and algorithmic concerning polytopes, Schlegal diagrams, and projections. However, implementations that compute these convex sets have not been developed to the best of our knowledge. If these convex sets are computable, then one could implement an algorithm demonstrating any graph is a Mapper graph with trivial clustering.

\subsection{Inverse Problem with Continuous Lens Functions}\label{sec: continuous extensions}

While the lens function in Section~\ref{sec:convex-subsets} that maps points to intersections of convex sets is still somewhat contrived, we note that it can be formulated as a continuous function. We show that the lens function $f:X \to Y$ can be viewed as a continuous function and that $f$ can be extended when $X$ is a closed subset of $\R^n$, as in the case of finite point cloud datasets.

If the dataset $X$ does not come from any natural underlying topological space, we can generate a topology on $X$ using the preimages of the open sets in $Y$. %
If our dataset $X$ is contained in a ``nice'' topological space, such as $\mathbb{R}^n$, we can extend $f$ to a continuous function $F$ on the whole space.
This follows from the well-known Tietze extension theorem (see, for instance, \cite{MunkresTopology00}), which implies that any $\R^d$-valued continuous function on a closed subset of a {\it normal} topological space can be extended to a continuous function on the whole space.

Furthermore, when our dataset $X$ is contained in $\R^n$, we have Lipschitz extensions and $m$-times continuously differentiable (denoted as $C^m$) extensions. Recall that a function $g:Z \to \R^d $ is \emph{Lipschitz continuous} if there exists a real constant $L\geq 0$ such that $|f(z_1) - f(z_2)| \leq L|z_1 - z_2|$ for all $z_1, z_2 \in Z$, and the \emph{Lipschitz constant} $\mathrm{Lip}(g)$ of $g$ is the smallest (infimum) of all such constants $L$.

If one does not care about increasing the Lipschitz constant $\mathrm{Lip}(f) > 0$ of $f = (f_1, \dots, f_d) \colon X\to \R^d$ by a constant factor, we have an explicit formula for its Lipschitz extension.
The function $F = (F_1, \dots, F_d) \colon \R^n \to \R^d$ defined by 
\[
F_i(y) := \inf_{x \in X} \{f_i(x) + \mathrm{Lip}(f)|x - y|\}\quad \text{ for each } i = 1, \dots, d
\]
is a Lipschitz extension of $f$, and the Lipschitz constant of $F$ is $\sqrt{d} \mathrm{Lip}(f)$~\cite[Theorem 3.1]{evans2018measure}. 
On the other hand, if we want to preserve the Lipschitz constant of $f$, Kirszbraun's theorem states that an $\R^d$-valued Lipschitz function defined on an arbitrary subset of $\R^n$ can be extended to a Lipschitz function on all of $\R^n$ with the {\it same} Lipschitz constant (see~\cite[2.10.43]{federer2014geometric} for an existence proof). 
As for $C^m$ extensions, Whitney's extension theorem (see~\cite[Theorem 6.10]{evans2018measure} for a proof of the case when $m = 1$) guarantees the existence of an $m$-times continuously differentiable extension of $f$. 
Seeing how ``nice'' (i.e., small $C^m(\R^n)$ norm) the $C^m$ extension can be is an active area of research~\cite{fefferman2009whitney, fefferman2020fitting}.

In the context of Section \ref{sec:convex-subsets}, recall that we construct a lens function $f$ that maps our dataset $X$ into a union $\Omega$ of convex sets in $\R^3$. The image of the extension $F \colon \R^n \to \R^3$ of $f \colon X \to \R^3$ obtained with the Tietze, Kirszbraun, or Whitney extension theorem is not guaranteed to be contained in $\Omega$. 
However, if all convex sets are 3-dimensional and $f$ maps into the interior of $\Omega$, we can use Kirszbraun's extension theorem (for example) to ensure that $F$ maps small enough open balls around each data point in $X$ into $\Omega$.

\begin{prop}\label{prop lipschitz extensions}
Let $X = \{x_i\}$ be a finite subset of $\R^n$, and let $\mathcal{C}$ and $f \colon X \to \mathbb{R}^3$ be as in Theorem~\ref{thm: Any graph is a mapper graph v2}. Suppose that each convex set in $\mathcal{C}$ has a non-empty interior and that $f(X)$ is contained in the interior of $\Omega:= \cup_{C \in \mathcal{C}} C$. Then there exists a Lipschitz extension $F: U \to \mathbb{R}^3$ of $f$ to the union $U := \cup_i B(x_i, r_i)$ of open balls of radius $r_i > 0$ centered at each point $x_i$ in $X$ such that $\mathrm{Lip}(F) = \mathrm{Lip}(f)$. %
\end{prop}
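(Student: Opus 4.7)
The plan is to invoke Kirszbraun's theorem, which the authors have already recalled: since $f \colon X \to \R^3$ is Lipschitz (automatically so on a finite set whenever $X$ is equipped with the Euclidean metric), there exists a Lipschitz extension $\widetilde{F} \colon \R^n \to \R^3$ with $\mathrm{Lip}(\widetilde{F}) = \mathrm{Lip}(f)$. This gives a global extension with the correct Lipschitz constant; all that remains is to shrink the domain so the image lies inside $\Omega$, and then the restriction to that shrunken neighborhood will be our $F$.

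For each $x_i \in X$, the hypothesis $f(x_i) \in \mathrm{int}(\Omega)$ yields some $\delta_i > 0$ with $B(f(x_i), \delta_i) \subseteq \Omega$. Since $\widetilde{F}$ is Lipschitz with constant $L := \mathrm{Lip}(f)$, setting
\[
r_i := \frac{\delta_i}{L+1}
\]
(or any positive radius if $L = 0$) guarantees $\widetilde{F}(B(x_i, r_i)) \subseteq B(f(x_i), \delta_i) \subseteq \Omega$. Define $U := \bigcup_i B(x_i, r_i)$ and $F := \widetilde{F}|_U$. Then $F$ is an extension of $f$ whose image is contained in $\Omega$, and clearly $\mathrm{Lip}(F) \leq \mathrm{Lip}(\widetilde{F}) = L$; the reverse inequality is immediate because $F$ agrees with $f$ on $X \subseteq U$, so $\mathrm{Lip}(F) = \mathrm{Lip}(f)$ as required.

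There is no serious obstacle here: the content of the proposition is essentially the combination of Kirszbraun (cited in the paper) with a continuity/openness argument. The only care needed is in the quantitative choice of the $r_i$'s, where one must use the Lipschitz bound rather than mere continuity in order to get an explicit radius depending only on $\delta_i$ and $\mathrm{Lip}(f)$. One might also remark that since $X$ is finite, $\mathrm{Lip}(f)$ is automatically finite, so the construction is never vacuous, and that if $\mathrm{Lip}(f) = 0$ (the case of a constant $f$) one simply chooses $r_i := \delta_i$.
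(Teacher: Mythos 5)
Your proposal is correct and follows essentially the same route as the paper: apply Kirszbraun's theorem to get a global extension with $\mathrm{Lip}(F)=\mathrm{Lip}(f)$, use the hypothesis $f(x_i)\in\mathrm{int}(\Omega)$ to pick $\delta_i>0$ with $B(f(x_i),\delta_i)\subseteq\Omega$, and shrink the domain to balls of radius comparable to $\delta_i/L$ so the image stays in $\Omega$. The only cosmetic difference is your choice of $r_i=\delta_i/(L+1)$ versus the paper's $\delta_i/L$, which just handles the $L=0$ case uniformly.
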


\begin{proof}
Since $X$ is finite, $\mathrm{Lip}(f) < \infty$.
For simplicity, denote $L:=\mathrm{Lip}(f)$. By Kirszbraun's theorem, there exists an extension $F \colon \R^n \to \R^3$ of $f$ with $\mathrm{Lip}(F) = L$, which means that $F(x) = f(x)$ for all $x\in X$ and
\begin{equation}\label{eq: lipschitz definition}
|F(x) - F(y)| \leq L|x - y| \quad \text{ for all } x, y \in \R^n.
\end{equation}
Fix $i$ and let
\[
\delta_i := \inf_{y \in \R^n\setminus \Omega} |F(x_i) - y|.
\]
Since $f(X)$ is contained in the interior of $\Omega$ by assumption, the same holds for $F(X)$, which shows $\delta_i>0$. Thus, $B(F(x_i), \delta_i) \subseteq \Omega$. 
To prove $F(B(x_i, \delta_i/L)) \subseteq B(F(x_i), \delta_i)$, note that for $x \in B(x_i, \delta_i/L)$,
\begin{align*}
    |x_i-x|<\frac{\delta_i}{L}, \mbox{ or }
    L|x_i-x|<\delta_i.
\end{align*}
Then by (\ref{eq: lipschitz definition}),
\[
|F(x_i)-F(x)|<L\, |x_i-x|<\delta_i,
\]
which shows $F(x) \in B(F(x_i), \delta_i)$. Consequently, $F(B(x_i, \delta_i/L)) \subseteq B(F(x_i), \delta_i) \subseteq \Omega$ and
therefore $F(U)\subseteq \Omega$ for $U = \cup_i B(x_i, \delta_i/L)$.
\end{proof}
We can now make the following observation: 
If one wants to update the dataset to include new data points $X'$ which lie within $U$, the Mapper graph with trivial clustering of $F \colon X \cup X' \to \Omega$ will be equal to that of $f \colon X \to \Omega$ for the same collection of convex sets.
For this purpose, one may want to optimize the initial Lipschitz function $f \colon X \to \Omega$ so that $U$ is as large as possible. %

\section{Generalizing Mapper Constructions to Simplicial Complexes}
\label{sec:generaliziation}

In this section, we will briefly see how our results from Section \ref{trivial clustering} generalize to general simplicial complexes.
We will see how our constructions in Sections \ref{sec:graph-co-domain}, and \ref{sec:convex-subsets} generalize to show that for any dataset $X$, any {\it small enough} simplicial complex $K$ is a Mapper construction (Mapper simplicial complex). 
By ``small enough'' simplicial complex we mean that $|X| \geq |K\setminus V|+|I|$, where $V$ is the vertex set of $K$, $K\setminus V$ is the set of all $i$-faces of $K$ with $i \geq 1$, and $I$ is the set of isolated vertices of $K$.
This is analogous to the condition that $|X| \geq |E|+|I|$ where $E$ is the edge set and $I$ is the set of isolated vertices from Theorems~\ref{theorem:trivial-mapper-star} and \ref{theorem:convex-sets}.
We will also discuss how the continuous (e.g. Lipschitz continuous) extensions from Section \ref{sec: continuous extensions} can be applied to the generalization of the results in Section \ref{sec:convex-subsets}.

We begin by generalizing Theorem~\ref{theorem:trivial-mapper-star} to the case of simplicial complexes $K$ endowed with the Alexandrov topology. 
We consider the (simplicial) star open cover of $K$ along with a lens function $f\colon X \to K$ that maps at least one point $X$ to each isolated vertex, and each $i$-face of $K$ for $i\geq 1$ instead of each edge of a graph.

\begin{theorem} 
\label{theorem:trivial-mapper-construction-star}
Let $X$ be a set of points. 
If $K$ is a simplicial complex with vertex set $V$ and isolated vertex set $I$ such that $|X| \geq |K\setminus V|+|I|$, then there exists a cover $\mathcal{U}$ of $K$, and a function $f\colon X\to K$ such that the Mapper construction with trivial clustering $\MT(\mathcal{U}, f)$ is isomorphic to $K$.
\end{theorem}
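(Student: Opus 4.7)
The plan is to imitate the proof of Theorem~\ref{theorem:trivial-mapper-star} with ``edge'' replaced everywhere by ``simplex of dimension at least one.'' I would take $\mathcal{U}$ to be the simplicial star cover $\{\operatorname{st}(v)\mid v \in V\}$ of $K$, which is an open cover in the Alexandrov topology, and use the hypothesis $|X| \geq |K\setminus V|+|I|$ to choose a function $f\colon X \to K$ that is surjective onto $(K\setminus V) \cup I$: every simplex of positive dimension and every isolated vertex of $K$ receives at least one data point. The candidate isomorphism is $\varphi\colon V \to f^{-1}(\mathcal{U})$ defined by $\varphi(v) = f^{-1}(\operatorname{st}(v))$, regarded as a labeled element of the (multi)collection $f^{-1}(\mathcal{U})$ following the multiset convention noted after the Mapper construction.

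The combinatorial engine of the proof is the standard fact that for any nonempty $\sigma = \{v_0,\dots,v_k\} \subseteq V$,
\[
\bigcap_{i=0}^{k} \operatorname{st}(v_i) \neq \emptyset \iff \sigma \in K.
\]
The ``if'' direction is immediate since $\sigma$ itself then lies in every $\operatorname{st}(v_i)$, while the ``only if'' direction uses that any $\tau$ in the intersection contains each $v_i$, so closure of $K$ under subsets forces $\sigma \in K$. Since preimages commute with intersections, $\bigcap_{i} f^{-1}(\operatorname{st}(v_i)) = f^{-1}\!\left(\bigcap_{i}\operatorname{st}(v_i)\right)$, and the surjectivity of $f$ onto $(K\setminus V)\cup I$ promotes the displayed equivalence to the preimage-level statement
\[
\bigcap_{i=0}^{k} f^{-1}(\operatorname{st}(v_i)) \neq \emptyset \iff \sigma \in K.
\]
For $k \geq 1$ with $\sigma \in K$, the simplex $\sigma$ itself lies in $K\setminus V$ and in the intersection of stars, hence has a preimage in $X$; for $k = 0$, either $\{v\}\in I$ has a preimage directly, or $v$ belongs to some $\tau \in K\setminus V$ whose preimage sits inside $\operatorname{st}(v)$. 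This equivalence says exactly that $\varphi$ extends to a simplicial isomorphism $K \cong \MT(\mathcal{U}, f) = \mathrm{Nrv}(f^{-1}(\mathcal{U}))$.

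The main obstacle I anticipate is bookkeeping rather than substance: one has to confirm that every vertex of $K$ produces a non-empty preimage so that $\varphi$ actually lands in the vertex set of the nerve, and to handle $0$-simplices separately from higher-dimensional ones. These are precisely the reasons the counting condition splits as $|K\setminus V| + |I|$ instead of just $|K\setminus V|$: an isolated vertex is not covered by the preimage of any positive-dimensional face, so it must receive a data point on its own. Beyond these points, the argument is a direct upgrade of the $1$-dimensional case using the well-known ``intersection of stars equals the set of common cofaces'' identity for abstract simplicial complexes.
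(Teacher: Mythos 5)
Your proof is correct and takes exactly the approach the paper intends: the paper gives no detailed proof of this theorem, saying only that one uses the simplicial star cover together with a lens function that hits every positive-dimensional face and every isolated vertex, in direct analogy with Theorem~\ref{theorem:trivial-mapper-star}. Your elaboration --- the identity $\bigcap_i \operatorname{st}(v_i) \neq \emptyset \iff \sigma \in K$, its promotion to preimages via surjectivity onto $(K\setminus V)\cup I$, and the check that every vertex of $K$ yields a nonempty cover-element preimage --- correctly supplies the details the paper omits.
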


Since any simplicial complex of dimension $d$ can be realized in $\mathbb{R}^{2d+1}$ where $2d+1$ is as small as possible (by the van Kampen-Flores theorem) \cite{van1933komplexe,flores1933n},
we have a star cover in $\mathbb{R}^{2d+1}$ along with a lens function that maps at least one point to each intersection of stars and points for isolated vertices. Similarly, Theorem~\ref{theorem:convex-sets} also holds in $\mathbb{R}^{2d+1}$ due to the generality of  Theorem~\ref{theorem:representability} on $d$-representable complexes \cite{wegner1967eigenschaften,tancer2011representability}, using a lens function that maps at least one point to  intersections of convex sets and convex sets for isolated vertices.

\begin{theorem}\label{thm: convex sets in R2d+1}
Let $X$ be a set of points. If $K$ is a simplicial complex with vertex set $V$ and isolated vertex set $I$ such that $|X| \geq |K\setminus V|+|I|$, then there exists a collection $\mathcal{C}$ of convex sets in $\mathbb{R}^{2d+1}$ and a function \mbox{$f \colon X \to \mathbb{R}^{2d+1}$} such that the Mapper construction $\MT(\mathcal{C}, f)$ with trivial clustering is isomorphic to $K$. 
\label{theorem:convex-sets-d}
\end{theorem}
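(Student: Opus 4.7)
The plan is to adapt the proof of Theorem~\ref{theorem:convex-sets} essentially verbatim, upgrading the input from Wegner's representability of graphs in $\R^3$ to the full statement that every $d$-dimensional simplicial complex is $(2d+1)$-representable~\cite{wegner1967eigenschaften,tancer2011representability}. First I would invoke this theorem to obtain a finite collection $\mathcal{C} = \{C_v\}_{v \in V}$ of convex subsets of $\R^{2d+1}$ indexed by $V$ with $\mathrm{Nrv}(\mathcal{C}) \cong K$; concretely, $\bigcap_{v \in \sigma} C_v \neq \emptyset$ if and only if $\sigma \in K$.

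Next I would construct the lens function by witnessing each relevant intersection. For each non-vertex face $\sigma \in K \setminus V$, pick an arbitrary $p_\sigma \in \bigcap_{v \in \sigma} C_v$, and for each isolated vertex $v \in I$ pick an arbitrary $p_v \in C_v$. Because the total number of such witness points is $|K \setminus V| + |I| \leq |X|$, we may define $f \colon X \to \R^{2d+1}$ so that every $p_\sigma$ and every $p_v$ lies in $f(X)$, assigning any leftover data points arbitrarily among these targets. Note that faces $\sigma \notin K$ need no special treatment: the isomorphism $\mathrm{Nrv}(\mathcal{C}) \cong K$ already forces $\bigcap_{v \in \sigma} C_v = \emptyset$ in that case.

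Finally I would verify the isomorphism. Following the graph case, define $\varphi \colon V \to f^{-1}(\mathcal{C})$ by $\varphi(v) = f^{-1}(C_v)$ and observe that
\[
\bigcap_{v \in \sigma} f^{-1}(C_v) \;=\; f^{-1}\!\Bigl(\bigcap_{v \in \sigma} C_v\Bigr)
\]
for every $\sigma \subseteq V$. If $\sigma \in K \setminus V$, the witness $p_\sigma \in f(X)$ makes this preimage non-empty; an isolated singleton $\{v\}$ is witnessed by $p_v$; and a non-isolated singleton $\{v\}$ is automatically witnessed through the witness $p_\sigma$ of any larger face containing $v$, since $p_\sigma \in C_v$. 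Conversely, if $\sigma \notin K$, the intersection on the right is empty, hence so is the preimage. Thus $\varphi$ extends to a simplicial isomorphism $K \cong \MT(\mathcal{C}, f)$. The only non-routine ingredient is the invocation of Wegner's theorem in $\R^{2d+1}$; once $\mathcal{C}$ is in hand, the argument is a direct lift of the graph case, and I anticipate no additional obstacles.
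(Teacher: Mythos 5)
Your proposal is correct and takes essentially the same approach as the paper: the paper gives no separate proof of this theorem, only remarking that the argument of Theorem~\ref{theorem:convex-sets} carries over by replacing Wegner's $3$-representability of graphs with the $(2d+1)$-representability of $d$-dimensional complexes, which is precisely what you do. Your explicit verification of the nerve isomorphism (including witnessing non-isolated vertices via the witness point of a larger face) fills in details the paper leaves implicit, with no gaps.
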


Finally, we note that the Lipschitz extension used in Proposition \ref{prop lipschitz extensions} is general enough to apply to the function $f \colon X \to \mathbb{R}^{2d + 1}$ constructed in Theorem \ref{thm: convex sets in R2d+1} as follows. 
\begin{prop}\label{prop: extension in general Euclidean space}
Let $X = \{x_i\}$ be a finite subset of $\R^n$, and let $\mathcal{C}$ and $f \colon X \to \mathbb{R}^{2d + 1}$ be as in Theorem~\ref{thm: convex sets in R2d+1}. Suppose that each convex set in $\mathcal{C}$ has a non-empty interior and that $f(X)$ is contained in the interior of $\Omega:= \cup_{C \in \mathcal{C}} C$. 
Then there exists a Lipschitz extension $F: U \to \mathbb{R}^{2d+1}$ of $f$ to the union $U := \cup_i B(x_i, r_i)$ of open balls of radius $r_i > 0$ centered at each point $x_i$ in $X$ such that $\mathrm{Lip}(F) = \mathrm{Lip}(f)$.
\end{prop}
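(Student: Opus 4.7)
The plan is to mimic the proof of Proposition~\ref{prop lipschitz extensions} essentially verbatim, observing that none of its ingredients depend on the target dimension being $3$. The codomain is now $\R^{2d+1}$ rather than $\R^3$, but Kirszbraun's theorem applies to Lipschitz maps from subsets of $\R^n$ into any Euclidean space $\R^k$, so the whole argument transports directly.

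First, since $X$ is finite, $L := \mathrm{Lip}(f) < \infty$. Applying Kirszbraun's theorem to $f \colon X \to \R^{2d+1}$ yields a global extension $F \colon \R^n \to \R^{2d+1}$ with $\mathrm{Lip}(F) = L$ and $F|_X = f$. This is the only step that uses anything nontrivial, and it works in every dimension.

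Next, for each $x_i \in X$, define
\[
\delta_i := \inf_{y \in \R^{2d+1}\setminus \Omega} |F(x_i) - y|.
\]
Because $f(X) = F(X)$ lies in the interior of $\Omega$ by hypothesis, the point $F(x_i)$ is a positive distance from the complement of $\Omega$, so $\delta_i > 0$ and $B(F(x_i), \delta_i) \subseteq \Omega$. Setting $r_i := \delta_i / L$, the Lipschitz bound gives, for every $x \in B(x_i, r_i)$,
\[
|F(x) - F(x_i)| \leq L\,|x - x_i| < L \cdot \frac{\delta_i}{L} = \delta_i,
\]
hence $F(B(x_i, r_i)) \subseteq B(F(x_i), \delta_i) \subseteq \Omega$. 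Taking the union over $i$, we conclude that $F(U) \subseteq \Omega$ for $U := \cup_i B(x_i, r_i)$, and restricting $F$ to $U$ produces the desired extension with $\mathrm{Lip}(F|_U) \leq L = \mathrm{Lip}(f)$; equality holds because $F|_U$ still agrees with $f$ on $X$.

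There is no real obstacle here beyond noticing that the argument of Proposition~\ref{prop lipschitz extensions} never used the value $3$ of the codimension; the only inputs were Kirszbraun's theorem (dimension-free on the target), the openness of the interior of $\Omega$, and the definition of Lipschitz constants. So the proof is essentially a line-by-line copy with ``$\R^3$'' replaced by ``$\R^{2d+1}$.''
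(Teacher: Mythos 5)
Your proposal is correct and matches the paper's approach exactly: the paper itself gives no separate proof, merely remarking that the argument of Proposition~\ref{prop lipschitz extensions} is dimension-free in the target and transfers verbatim to $\R^{2d+1}$, which is precisely what you carry out. (As a minor bonus, your infimum over $\R^{2d+1}\setminus\Omega$ correctly places $\Omega$ in the codomain, whereas the earlier proof's $\R^n\setminus\Omega$ is a slip.)
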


\section{Conclusion}
In this paper, we showed that, given any point cloud and any simplicial complex, there exist parameters such that the Mapper construction of the input point cloud results in the given simplicial complex under a mild condition. Additionally, this result can still be achieved when limiting the Mapper algorithm to parameters that arise more naturally in practice, such as requiring the lens function to be a $C^n$ or Lipschitz continuous function, and requiring the open cover to consist of convex sets. Given this flexibility of the Mapper algorithm, we encourage users to carefully consider the effects of parameter choices on the resulting Mapper construction.

\noindent \textbf{Acknowledgements.} This research is a product of one of the working groups at the American Mathematical Society (AMS) Mathematical Research Community: \textit{Models and Methods for Sparse (Hyper)Network Science} in June 2022. The workshop and follow-up collaboration was supported by the National Science Foundation under grant number DMS 1916439. Any opinions, findings, and conclusions or recommendations expressed in this
material are those of the author(s) and do not necessarily reflect the views of the National Science Foundation or the American Mathematical Society.

The authors thank Greg Henselman-Petrusek for his careful reading and valuable comments.
\bibliographystyle{plain}
\bibliography{bibliography}

\end{document}